\newtheorem{thm}{Theorem}
\newtheorem{lem}{Lemma}
\theoremstyle{definition}
\newtheorem*{rem}{Remark}
\theoremstyle{definition}
\newtheorem*{qn}{Question}
\newcommand{\eqn}{\begin{equation}}
\newcommand{\eqnend}{\end{equation}}
\newcommand{\Ipc}{\overline{I(p)}^{\,c}}
\newcommand{\Ifc}{\overline{I(f)}^{\,c}}
\newcommand{\Itildefc}{\overline{I(\tilde{f})}^{\,c}}
\newcommand{\Real}{\mathop{\rm Re}\nolimits}
\newcommand{\Imag}{\mathop{\rm Im}\nolimits}
\begin{document}

\title{Wandering domains in quasiregular dynamics}
\author{Daniel A. Nicks}

\begin{abstract}
We show that wandering domains can exist in the Fatou set of a polynomial type quasiregular mapping of the plane. We also give an example of a quasiregular mapping of the plane, with an essential singularity at infinity, which has a sequence of wandering domains contained in a bounded part of the plane. This contrasts with the situation in the analytic case, where wandering domains are impossible for polynomials and, for transcendental entire functions, the existence of wandering domains in a bounded part of the plane has been an open problem for many years.
\end{abstract}

\maketitle

\section{Introduction}

One generalisation of non-linear polynomials in the complex plane is the family of quasiregular mappings of polynomial type for which the degree exceeds the dilatation. The definition of these terms, and a number of others, is deferred to the next section. Here we mention only that a quasiregular map $f:\mathbb{R}^m\to\mathbb{R}^m$ is said to be of \emph{polynomial type} if $f(x)\to\infty$ as $x\to\infty$, and that the \emph{degree} of such a mapping is defined as the maximal number of pre-images of any value.

Suppose that an open set $U$ is completely invariant under a mapping $f$. Let $U_0$ be a component of $U$ and let $U_n$ denote that component of $U$ which contains $f^n(U_0)$. Then we say that $U_0$ is a \emph{wandering component} or a \emph{wandering domain} of $U$ if $U_n\ne U_m$ whenever $n\ne m$.

Motivated by Sullivan's celebrated proof that the Fatou set of a rational function has no wandering components~\cite{Sullivan}, Lasse Rempe asked the following question.

\begin{qn}[{\cite[Problem 6, p.2959--2960]{OWF}}]
Let $p:\mathbb{R}^2\to\mathbb{R}^2$ be a $K$-quasiregular mapping of polynomial type, such that $\deg p >K$. Can the Fatou set of $p$ have a wandering component? 
\end{qn}

Before tackling this question, we must consider what we mean by the Fatou and Julia sets of a quasiregular mapping. 
For a polynomial, it is well known that the Julia set is the boundary of the escaping set. 
We have the following result about the escaping sets of quasiregular mappings of polynomial type. Here, the winding map $re^{i\theta}\mapsto re^{2i\theta}$ shows the sharpness of the degree condition.

\begin{thm}[\cite{FN}]\label{Thm:FN}
Let $p:\mathbb{R}^m\to\mathbb{R}^m$ be a $K$-quasiregular mapping of polynomial type, such that $\deg p> K$. Then the escaping set \[I(p)=\{x\in\mathbb{R}^m:p^n(x)\to\infty \mbox{ as } n\to\infty\}\]
is non-empty, open and connected. The boundary $\partial I(p)$ is a perfect set and is completely invariant under $p$.
\end{thm}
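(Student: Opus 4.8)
The plan is to locate an absorbing neighbourhood of infinity inside $I(p)$, from which openness, non-emptiness and complete invariance follow almost formally, then to establish connectedness by a soft topological argument, and finally to analyse the boundary. The one substantial analytic ingredient is that the degree condition makes $p$ expand moduli far out: there are $\lambda>1$ and $R>0$ with
\[
|p(x)| \ge \lambda |x| \qquad \text{whenever } |x|\ge R.
\]
This is the only place $\deg p>K$ is used; it comes from the standard lower growth estimate for quasiregular mappings of polynomial type (proved via moduli of curve families), which bounds $\min_{|x|=r}|p(x)|$ below by a power of $r$ whose exponent exceeds $1$ precisely when $\deg p>K$ --- the winding map $re^{i\theta}\mapsto re^{2i\theta}$, for which $\deg=K$ and $|p(x)|\equiv|x|$, shows the condition is sharp. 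Granting this, put $A=\{x:|x|>R\}$, which is open and, for $m\ge2$, connected. Since $|x|>R$ forces $|p(x)|\ge\lambda|x|>R$ we have $p(A)\subseteq A$ and, iterating, $|p^n(x)|\ge\lambda^n|x|\to\infty$ on $A$; hence $A\subseteq I(p)$, so $I(p)\ne\emptyset$.

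Openness and invariance are then formal. An orbit escapes as soon as it enters $A$, so $I(p)=\bigcup_{n\ge0}(p^n)^{-1}(A)$ is a union of open sets and hence open, while $p^{-1}(I(p))=I(p)$ is immediate from the definition of the escaping set. Since non-constant quasiregular maps are open and discrete, and a polynomial-type map is proper and therefore surjective, we also obtain $p(I(p))=I(p)$ and, taking complements and closures, $p^{-1}(\overline{I(p)})=\overline{I(p)}$; thus $\partial I(p)=p^{-1}(\partial I(p))=p(\partial I(p))$ is completely invariant. It is non-empty as well: $\partial I(p)\subseteq\{|x|\le R\}$, whereas the nested intersection $\bigcap_{n\ge0}(p^n)^{-1}(\overline{B(0,R)})$ of non-empty compact sets is non-empty and consists of non-escaping points, so $I(p)\ne\mathbb{R}^m$.

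For connectedness, suppose $I(p)=V_1\sqcup V_2$ with $V_1,V_2$ open and non-empty; as $A$ is connected we may assume $A\subseteq V_1$. The connected set $p(V_1)$ lies in $V_1\sqcup V_2$ and cannot lie in $V_2$, since $\emptyset\ne p(A)\subseteq p(V_1)$ while $p(A)\subseteq A\subseteq V_1$; so $p(V_1)\subseteq V_1$. For $x\in V_2$ the orbit eventually meets $A\subseteq V_1$, so at the first time $k\ge1$ with $p^k(x)\in V_1$ we have $p^{k-1}(x)\in V_2$, showing that the connected set $p(V_2)$ meets $V_1$; hence $p(V_2)\subseteq V_1$. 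Then $p(I(p))=p(V_1)\cup p(V_2)\subseteq V_1$, contradicting $p(I(p))=I(p)$ and $V_2\ne\emptyset$. So $I(p)$ is connected.

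Finally, $\partial I(p)$ is closed and I must exclude isolated points. If $z_0\in\partial I(p)$ were isolated then, $I(p)$ being open, $z_0$ is actually an isolated point of the compact set $I(p)^c$, so $B(z_0,r)\setminus\{z_0\}\subseteq I(p)$ for small $r$ while $z_0\notin I(p)$; since $p$ is open and discrete, $p(z_0)$ and every preimage of $z_0$ inherit this property, so the isolated points of $I(p)^c$ form a non-empty completely invariant set. From here I would run the classical argument in quasiregular guise: $p^n\to\infty$ locally uniformly on $I(p)$ while the orbit of $z_0$ stays bounded, so the iterate family $\{p^n\}$ is not normal at $z_0$, and by a quasiregular Montel-type theorem (Rickman, Miniowitz) the forward images of a neighbourhood of $z_0$ omit only boundedly many points; this confines $I(p)^c$, after dealing with the orbit of $z_0$ separately, to a finite completely invariant set, and a non-empty finite completely invariant set is ruled out because preimages proliferate under the degree-$\ge2$ map $p$ unless they collapse onto a periodic branch point --- which one shows is attracting, hence not on $\partial I(p)$. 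I expect the expansion estimate of the first step to be the main obstacle, with the branch-point lemma of the last step (delicate in $\mathbb{R}^m$, $m\ge3$, where the branch set can be large) the other point needing care.
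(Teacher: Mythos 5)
This theorem is quoted from \cite{FN}; the present paper gives no proof of it, so your proposal can only be measured against the strategy of that reference, which it broadly shares: the degree condition yields $|p(x)|\ge\lambda|x|$ with $\lambda>1$ for $|x|\ge R$ via the standard curve-family growth estimate, $A=\{|x|>R\}$ is then an absorbing connected neighbourhood of infinity inside $I(p)$, and $I(p)=\bigcup_{n\ge0}(p^n)^{-1}(A)$. Your treatment of non-emptiness, openness and complete invariance (using openness, discreteness and properness of $p$), and the nested-compact-sets argument that $I(p)\ne\mathbb{R}^m$, are all correct. Two steps, however, have genuine gaps.

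First, connectedness. The sets $V_1,V_2$ of an arbitrary disconnection of $I(p)$ need not themselves be connected, so neither $p(V_1)$ nor $p(V_2)$ is ``a connected set'', and the inference from ``$p(V_2)$ meets $V_1$'' to ``$p(V_2)\subseteq V_1$'' collapses. The standard repair stays inside your framework: show by induction that each $U_n=(p^n)^{-1}(A)$ is connected. The restriction of $p$ to any component $V$ of $p^{-1}(U_{n-1})$ is proper onto the connected set $U_{n-1}$, hence surjective; as $U_{n-1}$ is unbounded and $p$ is of polynomial type, $V$ must be unbounded and therefore meets $A$; since $A$ is connected and $A\subseteq U_n$, there is only one component. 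Then $I(p)$ is an increasing union of connected open sets all containing $A$.

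Second, and more seriously, perfectness. Your reduction of an isolated point of $\partial I(p)$ to an isolated point of the compact set $I(p)^c$, and the propagation of isolatedness along forward and backward orbits, are fine. But the appeal to a Montel-type theorem of Miniowitz--Rickman for the family $\{p^n\}$ is not available: those theorems require a uniform bound on the dilatation of the family, whereas $p^n$ is in general only $K^n$-quasiregular. The failure of normal-family arguments for iterates is precisely the central obstruction in quasiregular dynamics (and the reason Sun and Yang define $J(p)$ by a blowing-up property rather than by normality). A workable substitute: since $I(p)=\bigcup_n U_n$ is an increasing open exhaustion, $p^n\to\infty$ locally uniformly on $I(p)$; if $B(z_0,r)\setminus\{z_0\}\subseteq I(p)$, then for large $n$ the open connected set $p^n(B(z_0,r/2))$ has boundary inside $p^n(S(z_0,r/2))$, which is far from the origin, while containing the bounded point $p^n(z_0)$, so it contains a large ball and hence all of $I(p)^c$; as $p^n(B(z_0,r/2)\setminus\{z_0\})\subseteq I(p)$, this forces $I(p)^c$ to be a single fixed point of maximal local index. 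Excluding that last configuration is the real content of the perfectness claim, and your proposed route --- that such a periodic branch point must be attracting --- is false in general (a winding-type map composed with a radial expansion has a repelling fixed point of maximal local index), so this final step requires the modulus/capacity analysis of \cite{FN} rather than the dichotomy you describe.
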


This result suggests that the boundary of the escaping set $\partial I(p)$ may serve as an analogue to the Julia set of a polynomial. In light of this, we could attempt to answer Rempe's question by seeking wandering components of $\Ipc=\mathbb{R}^2\setminus\overline{I(p)}$.

An alternative interpretation of the above question is offered by the work of Sun and Yang \cite{SY1, SY2, SY3} (see also \cite{BergCMFT}), which considers quasiregular mappings of the two-dimensional Riemann sphere. They define the \emph{Julia set} $J(p)$ of a polynomial type quasiregular mapping $p:\mathbb{R}^2\to\mathbb{R}^2$ as the set of points $x\in\mathbb{R}^2$ with the following expanding property: if $U$ is any neighbourhood of $x$, then $\bigcup_np^n(U)$ contains all but at most one point of $\mathbb{R}^2$. Sun and Yang prove that if $\deg p> K(p)$, then the Julia set is non-empty, perfect and completely invariant. It is easy to see that $J(p)\subseteq\partial I(p)$.

For polynomial type mappings $p$ as above, Sun and Yang retain the familiar definition of the \emph{Fatou set}: $x\in F(p)$ if there exists a neighbourhood $U$ of $x$ such that $\{p^n|_U\}$ is a normal family. It should be remarked that under these definitions $\mathbb{R}^2\setminus(J(p)\cup F(p))$ can be non-empty; see for example \cite[Example~5.3]{BergCMFT}.

This note answers Rempe's question by constructing a suitable mapping $p$ which has wandering domains that are components both of $\Ipc$ and of the Fatou set $F(p)$. Our approach is based on the escaping set.

A further interesting feature of this example is that the Julia set is not equal to the boundary of the escaping set, in contrast to the polynomial case. In \cite[Example~5.3]{BergCMFT},  it was already shown that for a polynomial type mapping with $\deg p>K(p)$, the boundary of an attracting basin may not coincide with the Julia set. Note, however, that in our case the attracting fixed point at infinity is even superattracting.

In the final section, we modify the construction to produce a `transcendental' quasiregular mapping of the plane that exhibits wandering in a bounded region. That is, we describe a quasiregular map, with an essential singularity at infinity, which has a wandering domain on which the iterates converge to a finite constant.

\section{Definitions}

Let $G\subseteq\mathbb{R}^2$ be a domain. A continuous function $f:G\to\mathbb{R}^2$ is called \mbox{\emph{$K$-quasiregular}} if it belongs to the Sobolev space $W^{1}_{2, \mathrm{loc}}(G)$ and 
\[ |Df(x)|^2 \le KJ_f(x), \]
for almost every $x\in G$. Here $J_f(x)$ denotes the Jacobian determinant of $f$ at the point $x$, while $|Df(x)|=\sup_{|a|=1}|Df(x)a|$ is the operator norm of the formal derivative of $f$ at $x$. The least $K\ge1$ for which this holds is called the \emph{dilatation} $K(f)$. A function is said to be \emph{quasiregular} if it is $K$-quasiregular for some $K$. 

Any analytic function is $1$-quasiregular, and indeed quasiregular mappings are a natural generalisation of analytic functions. The definition of $K$-quasiregularity in higher dimensions differs slightly from the above and may be found in \cite{BergCMFT, Rickman}, along with more details of many of the concepts defined here.

A non-constant quasiregular mapping $f:\mathbb{R}^2\to\mathbb{R}^2$ is of \emph{polynomial type} if $f(x)\to\infty$ as $x\to\infty$. Otherwise, this limit does not exist and $f$ is said to have an \emph{essential singularity at infinity}. The \emph{degree} of $f$ can be defined by
\[ \deg f := \sup_{y\in\mathbb{R}^2}\left|f^{-1}(y)\right|; \]
that is, the maximal number of pre-images of any value in $\mathbb{R}^2$. It is well known that $\deg f$ is finite if and only if $f$ is of polynomial type. 

An injective quasiregular mapping is called \emph{quasiconformal}. Any quasiregular map $f:\mathbb{R}^2\to\mathbb{R}^2$ has a Stoilow factorisation of the form $f=g\circ h$, where $h$ is quasiconformal and $g$ is analytic \cite[\S3.2]{BergCMFT}. Note that if $f$ is of polynomial type then $g$ is a polynomial.

The composition of two quasiregular maps $f_1$ and $f_2$ is quasiregular with dilatation satisfying $K(f_1\circ f_2)\le K(f_1)K(f_2)$. In particular, the iterates of a quasiregular mapping are also quasiregular.

\section{The main construction}

We answer the question discussed in the introduction by describing a polynomial type quasiregular mapping that has wandering domains.

\subsection{Two quasiconformal maps}\label{2qcs}

The construction takes place in a number of stages, and we shall identify $\mathbb{R}^2$ with $\mathbb{C}$. To begin, let $W_0$ be the open diamond with vertices at $i$, $\frac{i}{2}$, $(3i+1)/4$ and $(3i-1)/4$. See Figure \ref{fig:Wandering}. For $y\in[\frac12,1]$, we define a continuous function $h_y:\mathbb{R}\to\mathbb{R}$ by setting $h_y(x)=1$ when $x+iy\in W_0$, while $h_y(x)=4$ when $|x|\ge y$. For intermediate values of $x$, we ask that $h_y$ is linear in $x$. Explicitly, in the regions where $h_y$ is not locally constant this gives
\[ h_y(x) = \left\{ \begin{array}{ll}
6|x| - 6y +4, & \frac12 \le y \le \frac34, \\
\frac{3|x|+5y-4}{2y-1}, & \frac34 < y \le 1. \raisebox{0mm}[5mm]{}
\end{array} \right. \]
Write $S=\{z:\frac12<\Imag z <1\}$.  We now define a homeomorphism $h:\overline{S}\to\overline{S}$ by 
\eqn h(x+iy) = h_y(x)x + iy, \label{defn h} \eqnend
so that $h$ is locally like a stretch by a factor $h_y(x)$ parallel to the real axis.

We show next that $h$ is quasiconformal on $S$. Note that $h$ is differentiable almost everywhere in $S$ and that $\Imag(h(x+iy))$ is independent of $x$. Hence, the Jacobian determinant of $h$ is given by 
\[ J_h = \frac\partial{\partial x}\left(\Real h(x+iy)\right) \cdot \frac\partial{\partial y}\left(\Imag h(x+iy)\right) = h_y(x)+h_y'(x)x         \quad\mbox{a.e.,}\] 
from which we calculate that $J_h\ge1$ almost everywhere in $S$. 

For $x+iy\in S$ with $|x|>y$, we have that $h(x+iy)=4x+iy$, and hence $|Dh(x+iy)|=4$. By appealing to a compactness argument, there exists $M>0$ such that $|Dh(x+iy)|\le M$ for almost every $x+iy\in\overline{S}$ with $|x|\le y$. Therefore, $h$ is $K$-quasiconformal on $S$ with $K=\max\{4^2,M^2\}$.

\begin{figure}[th]
	\centering
		\includegraphics[width=1.00\textwidth]{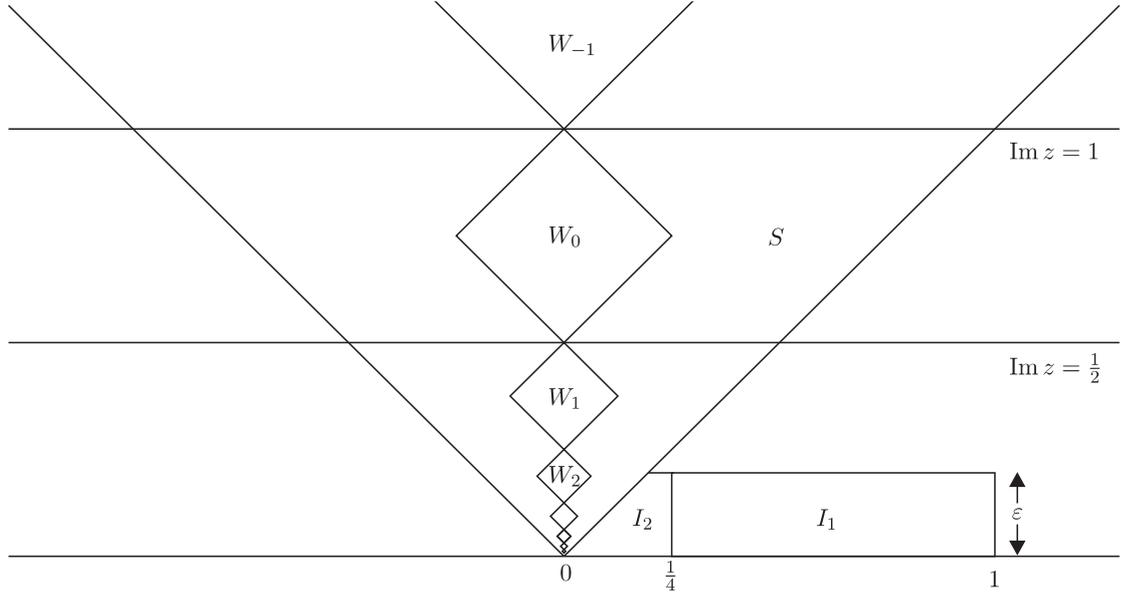}
	\caption{Some of the sets used in the construction.}	
	\label{fig:Wandering}
\end{figure}

We now claim that if $x_0>0$ and $x_0+iy\in\overline{S}\setminus\overline{W_0}$, then
\eqn x_n := \Real(h^n(x_0+iy))\to+\infty \quad \mbox{as } n\to\infty. \label{eqn:x_n to infty} \eqnend
That is, when $h$ is iterated, points to the right of $W_0$ tend to infinity through the right half of $S$.
To prove the claim, note that $h_y(x_0)>1$ and that $x_{n+1}=h_y(x_n)x_n$ by~\eqref{defn h}. Using the fact that $h_y(x)$ is increasing for $x>0$, it follows by induction that $x_n\ge (h_y(x_0))^nx_0$, which establishes \eqref{eqn:x_n to infty}.

The next step is to define, for $z=x+iy\in\mathbb{C}$,
\[ f(z) = \left\{ \begin{array}{ll}
2^{-(m+1)}h(2^mz), & \Imag z \in (2^{-(m+1)},2^{-m}], \ m\in\mathbb{Z},\\
2x + iy/2, & \Imag z \le 0. 
\end{array} \right. \]
Then $f$ is $K$-quasiconformal because in the upper half-plane it is locally the composition of the $K$-quasiconformal map $h$ with two (conformal) scalings.

Let $W_k=2^{-k}W_0$ as shown in Figure \ref{fig:Wandering}. We note that $f(z)=z/2$ on $\bigcup W_k$, since $h$ is the identity map on $W_0$. In fact, $\Imag f(z) = \Imag z/2$ for all $z$ by \eqref{defn h}. Moreover, $f(x+iy)=2x+iy/2$ in the region $\{x+iy:|x|\ge y\}$. These remarks could be used together with the next lemma to give a fairly complete description of the dynamics of $f$.

\begin{lem}\label{lem:re>im}
Let $z$ be such that $\Real z >0$ and $z\notin\bigcup\overline{W_k}$. Then for all sufficiently large $N$, we have 
\eqn \Real(f^N(z))>\Imag(f^N(z)). \label{Re>Im} \eqnend
\end{lem}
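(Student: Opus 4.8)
The plan is to track a point $z$ with $\Real z>0$ under iteration of $f$ and show that eventually its real part overtakes its imaginary part. The key structural fact is that $\Imag f(z)=\Imag z/2$ for every $z$, so $\Imag(f^n(z))=\Imag(z)/2^n\to 0$; in particular, after finitely many steps the imaginary part is at most $1$, and we may assume from the outset that $\Imag z\le 1$, indeed that $0<\Imag z\le 1/2$ after one more step if needed, though it is cleaner to work in the strip $\overline{S}$ using the self-similarity $f(z)=2^{-(m+1)}h(2^m z)$. First I would reduce to the case $z\in\overline{S}$ by noting that if $\Imag z\in(2^{-(m+1)},2^{-m}]$ then $2^m z\in\overline{S}$, and $f^n(z)=2^{-(m+1)}h^n(2^m z)\cdot(\text{with imaginary parts halved each step})$; more precisely the map $f$ restricted to each dyadic sub-strip is conjugate to $h$ by a scaling, and the condition $z\notin\bigcup\overline{W_k}$ translates to $2^m z\notin\overline{W_0}$. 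So it suffices to understand the real parts of the $h$-orbit of a point $w_0=x_0+iy_0\in\overline{S}\setminus\overline{W_0}$ with $x_0>0$.

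Now I would invoke the claim already established in the excerpt, equation~\eqref{eqn:x_n to infty}: since $x_0>0$ and $x_0+iy_0\in\overline{S}\setminus\overline{W_0}$, the real parts $x_n=\Real(h^n(x_0+iy_0))$ satisfy $x_n\to+\infty$. The subtlety is that $h$ preserves the strip $\overline S$ and does not decrease imaginary parts, whereas $f$ does halve imaginary parts; so I must be careful about how the dyadic rescaling interacts with the dynamics. The right way to organise this: for the point $z$ with $\Imag z\le 1/2$, let $m\ge 1$ be such that $\Imag z\in(2^{-(m+1)},2^{-m}]$ and set $w=2^m z\in\overline S$. Then $f(z)=2^{-(m+1)}h(w)$, and since $\Imag(h(w))=\Imag(w)$ lies in $(2^{-1},1]$, applying $f$ again gives $f^2(z)=2^{-(m+1)}\cdot 2^{-1}h^2(w)=2^{-(m+2)}h^2(w)$; inductively $f^{n}(z)=2^{-(m+n)}h^{n}(w)$ for all $n\ge 1$. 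Hence $\Real(f^n(z))=2^{-(m+n)}x_n$ and $\Imag(f^n(z))=2^{-(m+n)}\Imag(h^n(w))$ where $\Imag(h^n(w))\in(1/2,1]$ is bounded above by $1$. Therefore $\Real(f^n(z))>\Imag(f^n(z))$ as soon as $x_n>1$, which holds for all large $n$ by~\eqref{eqn:x_n to infty}. (One checks separately the easy boundary situation $\Imag z\le 0$, where $f(x+iy)=2x+iy/2$ so the real part doubles while the imaginary part halves and the conclusion is immediate, and one also handles an initial point with $\Imag z>1/2$ by first applying $f$ once or twice to enter the regime $\Imag z\le 1/2$ — note that the hypotheses $\Real z>0$ and avoidance of $\bigcup\overline{W_k}$ are preserved since $f$ maps the right half-plane minus the diamonds into itself in the relevant region.)

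The main obstacle I anticipate is the bookkeeping around the hypothesis $z\notin\bigcup\overline{W_k}$ and its preservation under iteration: I need to know that the rescaled point $w=2^m z$ genuinely lies in $\overline S\setminus\overline{W_0}$ so that~\eqref{eqn:x_n to infty} applies, and that applying $f$ a bounded number of times to bring $\Imag z$ into $(2^{-(m+1)},2^{-m}]$ does not accidentally land the orbit inside some $\overline{W_k}$ or make the real part non-positive. Since $f(z)=z/2$ on $\bigcup W_k$ and $\Imag f(z)=\Imag z /2$ everywhere, the diamonds $W_k$ are permuted/scaled in a controlled way ($f(W_{k})\subseteq \{z/2: z\in W_k\}\subseteq W_{k+1}$ essentially), and a point outside all of them with positive real part maps to such a point; making this precise is the one genuinely fiddly point, but it is geometric rather than deep. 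Once that is pinned down, the proof is just the combination of the dyadic self-similarity with the already-proved escape property~\eqref{eqn:x_n to infty} and the uniform bound $\Imag(h^n(w))\le 1$.
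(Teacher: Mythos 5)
Your proposal is correct and is essentially the paper's own proof: both reduce via the dyadic conjugacy $f^{N}(z)=2^{-(m+N)}h^{N}(2^{m}z)$ to the $h$-orbit of $2^{m}z\in\overline{S}\setminus\overline{W_0}$, note that $h$ preserves imaginary parts (bounded by $1$ on $\overline S$), and invoke the escape claim \eqref{eqn:x_n to infty} to get $\Real(h^{N}(2^{m}z))>\Imag(2^{m}z)$ for large $N$. The extra steps you worry about (first iterating into $\{\Imag z\le\tfrac12\}$ and checking preservation of the hypotheses) are unnecessary, since the conjugation identity handles all iterates at once for whatever $m$ satisfies $\Imag z\in(2^{-(m+1)},2^{-m}]$.
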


\begin{proof}
This is trivial unless $\Imag z>0$, in which case we take $m\in\mathbb{Z}$ such that $\Imag z \in (2^{-(m+1)},2^{-m}]$.
Then $f^N(z)=2^{-(N+m)}h^N(2^mz)$, and so it will suffice to show that, for large $N$,
\eqn  \Real(h^N(2^mz)) > \Imag(h^N(2^mz)) = \Imag(2^mz). \label{eqn:Re(h^N)>Im(h^N)} \eqnend
Since $z\notin\bigcup\overline{W_k}$, we have that $2^mz\in \overline{S}\setminus\overline{W_0}$, and hence \eqref{eqn:x_n to infty} gives \eqref{eqn:Re(h^N)>Im(h^N)} for all large~$N$.
\end{proof}

\subsection{A polynomial type map of high degree with wandering domains}

The mapping $f$ has many of the dynamical properties that we require, but it is an injective function. By composing with another mapping, we now produce a quasiregular map of large degree. Choose an odd integer $d$ such that $d>2K$. Let $\delta>0$ and define $p=g\circ f$, where 
\[ g(z) = \left\{ \begin{array}{ll}
z, & |z|\le 1,\\
z+\delta(|z|-1)z^d, & 1<|z|\le 2,\\
z+\delta z^d, & |z|>2.
\end{array} \right. \]
Hence 
\eqn p(z)=f(z) \quad \mbox{whenever} \quad |f(z)|\le 1. \label{p=f} \eqnend
As in \cite[Examples 5.2--5.4]{BergCMFT}, if we choose $\delta$ small enough, then $g$ is a quasiregular map of degree~$d$ with $K(g)\le2$. Therefore, $p$ is a quasiregular mapping of polynomial type such that
\[ K(p)\le K(g)K(f) \le 2K < d = \deg p. \]
Using \eqref{p=f}, we note that, for $k\ge0$, 
\eqn p\left(\overline{W_k}\right) = \overline{W_{k+1}} \label{p(W_k)=W_k+1}, \eqnend
and so $p^n(z)\to0$ on $\overline{W_k}$ as $n\to\infty$. Moreover, these sets $W_k$ lie in the Fatou set of $p$. Our aim is now to show that the $W_k$ are in fact components of $F(p)$ and of $\Ipc$.

We consider the escaping set $I(p)$. For all real $x>0$, we see that $f(x)=2x$ and $p(x)=g(2x)\ge 2x$, and hence $x\in I(p)$. Then, since $I(p)$ is open by Theorem~\ref{Thm:FN}, there exists $\varepsilon\in(0,\frac14)$ such that 
\[ I_1=\{x+iy : \textstyle\frac14\le x \le 1, \ 0\le y\le\varepsilon\} \subseteq I(p). \]
Let $I_2=\{x+iy: y<x\le\frac14, \ 0\le y\le\varepsilon\}$, as shown in Figure \ref{fig:Wandering}. Recalling \eqref{p=f} and the paragraph preceding Lemma~\ref{lem:re>im}, we find that if $x+iy\in I_2$ then $p(x+iy)=2x+iy/2$. Thus, each point in $I_2$ has some forward iterate under $p$ that lies in $I_1$. This implies that $I_2\subset I(p)$.

Now choose $j\in\mathbb{N}$ such that $2^{-j}\le\varepsilon$ and let $w\in\partial W_j$ with $\Real w\ge0$. Our next task is to show that $w\in\partial I(p)$. For all small $\eta>0$, the point $z=w+\eta$ satisfies the hypothesis of Lemma \ref{lem:re>im}, and so there exists $N\in\mathbb{N}$ such that \eqref{Re>Im} holds but $\Real(f^n(z))\le\Imag(f^n(z))$ for $0\le n <N$. We note that
\[ 0\le \Imag(f^n(z)) = 2^{-n}\Imag z = 2^{-n}\Imag w \le 2^{-n}\varepsilon, \qquad n\ge0. \]
It follows from the definition of $f$ and the above that
\[ 0\le \Real(f^N(z)) \le 2\Real(f^{N-1}(z)) \le 2\Imag(f^{N-1}(z))\le 2\varepsilon. \]
Therefore, we have that $|f^n(z)|<1$ for $n=0,1,\ldots,N$, and using \eqref{p=f} we deduce that $p^n(z)=f^n(z)$ for $n=0,1,\ldots,N$. By \eqref{Re>Im} and the above estimates, it follows that $p^N(z)=f^N(z)\in I_1\cup I_2$. This implies that $w+\eta\in I(p)$ for all small $\eta>0$ which, together with \eqref{p(W_k)=W_k+1}, completes the proof that $w\in\partial I(p)$.

Since the integer $d$ in the definition of $g$ was chosen to be odd, the function $p$ is symmetric about the imaginary axis in the sense that $p\left(-\overline{z}\right)=-\overline{p(z)}$. The escaping set $I(p)$ must share this symmetry, and therefore any point of \mbox{$\partial W_j\cap\{w:\Real w\le0\}$} must also belong to $\partial I(p)$. Hence $\partial W_j\subseteq\partial I(p)$ and, using the complete invariance of $\partial I(p)$ under $p$, we see that in fact $\partial W_k\subseteq\partial I(p)$ for all $k\ge0$. Using~\eqref{p(W_k)=W_k+1}, it now follows that $(W_k)_{k\ge0}$ are wandering components of $\Ipc$.

To show that $(W_k)_{k\ge0}$ are also components of the Fatou set $F(p)$, we need only observe that the family $\{p^n\}$ cannot be normal in any neighbourhood of a point of $\partial W_k$, as these points lie on the boundary of the escaping set.

It remains to prove the assertion made in the introduction that \mbox{$J(p)\ne\partial I(p)$}. To do this, let $j$ be large and consider $z_0\in\partial W_j$ not a vertex of $W_j$. If $U$ is a sufficiently small neighbourhood of $z_0$ then, by an argument similar to that used above, it can be shown that $U\setminus\overline{W_j}\subseteq I(p)$. Recall \eqref{p(W_k)=W_k+1} and the fact that $\overline{W_{j-1}}$ does not meet $I(p)$. Thus, for example, $W_{j-1}\cap\left(\bigcup_n p^n(U)\right)=\emptyset$ and this implies that $z_0\notin J(p)$.

\begin{rem}
A quasiregular mapping is called \emph{uniformly quasiregular} if there exists a uniform bound on the dilatation of all its iterates. 
By a result of Hinkkanen~\cite{Hinkkanen}, every uniformly quasiregular mapping of the plane is quasiconformally conjugate to an entire function. We may therefore deduce from Sullivan's theorem the fact that there are no wandering domains for uniformly quasiregular mappings of the plane of polynomial type.
\end{rem}

\section{A `transcendental' quasiregular map with `bounded wandering'}

A long-standing open question in complex dynamics is whether there exists a transcendental entire function $g$ with a wandering component $W$ of the Fatou set, such that $\bigcup_n g^n(W)$ is a bounded set (see for example \cite[Question~8]{Berg_Mero} or \cite[Problem~2.87]{BH}). In this section, we show that there does exist a quasiregular mapping of the plane, with an essential singularity at infinity, that has this `bounded wandering' property.

The quasiconformal map $f$ defined in Section \ref{2qcs} has a sequence of wandering domains $W_k$ that converge to the origin. (Lemma~\ref{lem:re>im} and the paragraph preceding it imply that the $W_k$ are wandering components of both $F(f)$ and $\Ifc$.) In fact, this remains true when $f$ is considered as a self-map of the upper half-plane. Starting from $f$, we construct a quasiregular mapping $\tilde{f}$ with the desired properties by using an interpolation technique from \cite[\S 6]{BFLM} to insert an essential singularity at infinity over the lower half-plane.

Let $\delta>0$ be small and define $\tilde{f}:\mathbb{C}\to\mathbb{C}$ by
\[ \tilde{f}(z) = \left\{ \begin{array}{ll}
f(z), & \Imag z \ge 0,\\
2z - \delta(\Imag z)\exp(-z^2), & -1\le\Imag z < 0,\\
2z + \delta\exp(-z^2), & \Imag z <-1.
\end{array} \right. \]
It is clear that $\tilde{f}$ is continuous on $\mathbb{C}$, quasiregular on the upper half-plane and analytic on $\{z:\Imag z<-1\}$. We show next that $\tilde{f}$ is quasiregular on 
\[ D=\{z:-1<\Imag z<0\}. \] 
Write $\phi(z)=(\Imag z)\exp(-z^2)$ and let $x$ and $y$ denote the real and imaginary parts of $z$ respectively. Then $\phi$ is continuous on $D$ and the partial derivatives $\phi_x$ and $\phi_y$ are bounded on $D$ because $\exp(-z^2)$ tends to zero rapidly as $z\to\infty$ in $D$. If $\delta$ is chosen sufficiently small, then it follows that $\tilde{f}(z)=2z-\delta\phi(z)$ is quasiregular on $D$; see \cite[p.647]{BFLM}. Here we have used the fact that the function $z\mapsto 2z$ is analytic with derivative bounded away from zero. Therefore, $\tilde{f}$ is quasiregular on the plane.

The exponential rate of growth of $\tilde{f}(-ir)\sim\delta\exp(r^2)$ as real $r\to\infty$ shows that $\tilde{f}$ is not of polynomial type; see \cite[Theorem~4.1]{Jarvi} for example.

Using the fact that $\tilde{f}$ coincides with $f$ on the upper half-plane, and arguing as before, we conclude that the sets $W_k$ (as defined in Section~\ref{2qcs}) are wandering components of $\Itildefc$ and of $F(\tilde{f})$. In particular, $\tilde{f}$ maps $W_k$ onto $W_{k+1}$, and hence in any fixed $W_k$ the sequence of iterates $\tilde{f}^n$ uniformly converges to zero as $n$ tends to infinity.

\begin{rem}
An important feature of the above example is that the fixed point of $\tilde{f}$ at the origin behaves like a saddle point. This type of behaviour is clearly not possible for an entire function. Moreover, P\'{e}rez-Marco \cite{P-M} has proved that, for an entire function, any orbit which converges to an irrationally indifferent fixed point is eventually constant. Thus the Fatou set of an entire function cannot have a wandering domain on which the iterates converge to a fixed point.
\end{rem}

The author would like to thank Walter Bergweiler and Alastair Fletcher for useful discussions, and the former for bringing \cite{P-M} to his attention. The author also thanks Phil Rippon and Gwyneth Stallard for asking about the existence of an example of bounded wandering with an essential singularity at infinity.


\begin{thebibliography}{99}

\bibitem{Berg_Mero} W. Bergweiler, Iteration of meromorphic functions, \emph{Bull. Amer. Math. Soc. (N.S.)} \textbf{29} (1993), 151--188.
\bibitem{BergCMFT} W. Bergweiler, Iteration of quasiregular mappings, \emph{Comput. Methods Funct. Theory} \textbf{10} (2010), 455--481.
\bibitem{BFLM} W. Bergweiler, A. Fletcher, J. K. Langley and J. Meyer, The escaping set of a quasiregular mapping, \emph{Proc. Amer. Math. Soc.} \textbf{137} (2009), 641--651.
\bibitem{BH} D. A. Brannan and W. K. Hayman, Research problems in complex analysis, \emph{Bull. London Math. Soc.} \textbf{21} (1989),  1--35.
\bibitem{FN} A. Fletcher and D. A. Nicks, Quasiregular dynamics on the $n$-sphere, to appear in \emph{Ergodic Theory Dynam. Systems}.
\bibitem{Hinkkanen} A. Hinkkanen, Uniformly quasiregular semigroups in two dimensions, \emph{Ann. Acad. Sci. Fenn. Math.} \textbf{21} (1996), 205--222.
\bibitem{Jarvi} P. J\"{a}rvi, On the zeros and growth of quasiregular mappings, \emph{J. Anal. Math.} \textbf{82} (2000), 347--362.
\bibitem{P-M} R. P\'{e}rez-Marco, Sur une question de Dulac et Fatou, \emph{C. R. Acad. Sci. Paris Sér. I Math.} \textbf{321} (1995), 1045--1048.
\bibitem{Rickman} S. Rickman, \emph{Quasiregular mappings}, Ergebnisse der Mathematik und ihrer Grenzgebiete \textbf{26},
Springer-Verlag, Berlin, 1993.
\bibitem{Sullivan} D. Sullivan, Quasiconformal homeomorphisms and dynamics I. Solution of the Fatou-Julia problem on wandering domains, \emph{Ann. of Math.} \textbf{122} (1985), 401--418.
\bibitem{SY1} D. Sun and L. Yang, Quasirational dynamical systems (in Chinese), \emph{Chinese Ann. Math. Ser. A} \textbf{20} (1999), 673--684.
\bibitem{SY2} D. Sun and L. Yang, Quasirational dynamic system, \emph{Chinese Science Bull.} \textbf{45} (2000), 1277--1279.
\bibitem{SY3} D. Sun and L. Yang, Iteration of quasi-rational mapping, \emph{Progr. Natur. Sci. (English Ed.)} \textbf{11} (2001),
16--25.
\bibitem{OWF} The escaping set in transcendental dynamics. Abstracts from the mini-workshop held December 6--12, 2009. Organized by Walter Bergweiler and Gwyneth M. Stallard, \emph{Oberwolfach Rep.} \textbf{6} (2009), no. 4, 2927--2964.


\end{thebibliography}
\end{document}